\definecolor{persianblue}{rgb}{0.11, 0.22, 0.73}
\definecolor{persiangreen}{rgb}{0.0, 0.65, 0.58}
\DeclareMathAlphabet{\mathpzc}{OT1}{pzc}{m}{it}
\newtheorem{theorem}{Theorem}[section]
\newtheorem{lemma}[theorem]{Lemma}
\newtheorem{proposition}[theorem]{Proposition}
\newtheorem{corollary}[theorem]{Corollary}
\newtheorem{definition}[theorem]{Definition}
\numberwithin{equation}{section}
\newcommand{\rest}{\! \upharpoonright \!}
\theoremstyle{remark}
\let\qed@empty\openbox 
\def\@begintheorem#1#2[#3]{%
  \deferred@thm@head{%
    \the\thm@headfont\thm@indent
    \@ifempty{#1}
      {\let\thmname\@gobble}
      {\let\thmname\@iden}%
    \@ifempty{#2}
      {\let\thmnumber\@gobble\global\let\qed@current\qed@empty}
      {\let\thmnumber\@iden\xdef\qed@current{#2}}%
    \@ifempty{#3}
      {\let\thmnote\@gobble}
      {\let\thmnote\@iden}%
    \thm@swap\swappedhead
    \thmhead{#1}{#2}{#3}%
    \the\thm@headpunct\thmheadnl\hskip\thm@headsep
  }\ignorespaces
}
\renewcommand{\qedsymbol}{%
  \ifx\qed@thiscurrent\qed@empty
    \qed@empty
  \else
    \fbox{\scriptsize\qed@thiscurrent}%
  \fi
}
\renewcommand{\proofname}{%
  Proof%
  \ifx\qed@thiscurrent\qed@empty
  \else
    \ of \qed@thiscurrent
  \fi
}
\xpretocmd{\proof}{\let\qed@thiscurrent\qed@current}{}{}
\newenvironment{proof*}[1]
  {\def\qed@thiscurrent{\ref{#1}}\proof}
  {\endproof}
\begin{document}


\title[Almost Strong Properness]{Almost Strong Properness} 

\author[R. Mohammadpour]{Rahman Mohammadpour}

\email{\href{mailto:rahmanmohammadpour@gmail.com}{rahmanmohammadpour@gmail.com}}
\urladdr{\href{https://sites.google.com/site/rahmanmohammadpour}{https://sites.google.com/site/rahmanmohammadpour}}



\subjclass[2010]{03E05, 03E35, 03E57} 
\keywords{MRP, PFA, Side condition, Suslin tree}

\begin{abstract} 
We introduce the forcing property ``almost strong properness'' which sits between properness and strong properness. As an application, we introduce a simple forcing with finite conditions to force $\rm MRP$.
\end{abstract}

\maketitle

\section*{Introduction}
The Mapping Reflection Principle ($\rm MRP$) was discovered and shown to be a consequence of $\rm PFA$ by Moore \cite{MooreMRP} using a proper forcing with countable conditions.
This is a strong reflection principle which decides the value of the continuum to be $\aleph_2$, and implies, among other things, both  the Singular Cardinal Hypothesis and the failure of  square principle, see \cite{VaileSCH}.
 In this note, we introduce a  subclass of proper forcing notions which contains strongly proper forcings, we shall observe that almost strongly proper forcings preserve c.c.c-ness and have the $\omega_1$-approximation property, and hence they preserve Suslin trees. We then demonstrate how to force $\rm MRP$ using an almost strongly proper forcing with finite conditions. Consequently,  $\rm MRP$ holds under Todorčević's forcing axiom $\rm PFA(S)$. This fact was first proved by Teruyuki Yorioka and Tadatoshi Miyamoto in \cite{MY}. In the same paper, they introduced also the notion of an almost strongly proper forcing for another purpose, but it is apparently  different from ours.

\section{ Almost Strong Properness}
Let us  recall the definition of a strongly generic condition due to Mitchell \cite{MI2005}.
Suppose $\mathbb P$ is a forcing, and that $X$ is a set. A condition
$p\in\mathbb P$ is called $(X,\mathbb P)$- strongly generic if and only if for every $q\leq p$ there is
$q\rest_X\in X\cap \mathbb P$ such that every $r\in X\cap\mathbb P$ with $r\leq q\rest_X$ is compatible with $q$. The notion of strong properness is defined in an obvious way.
\begin{definition}[Almost Strong Genericity]
Suppose $\mathbb P$ is a forcing, and $M\prec H_\theta$ contains $[\mathbb P]^{\omega}$. A condition
$p\in\mathbb P$ is called \emph{ $(M,\mathbb P)$-almost strongly generic} if and only if for every $q\leq p$,
$$S_q\coloneqq\{X\in [\mathbb P]^\omega: \exists q\rest_X\in X \text{ such that } \forall r\in X, r\leq q\rest_X\Rightarrow r||q \}$$
is $M$-stationary, i.e., for every algebra\footnote{Recall that an algebra over a set $Z$ is a function from $[Z]^{<\omega}$ to $Z$.} $F\in M$ over $\mathbb P$, there is some 
$A\in M\cap S_q$ which is $F$-closed. 
\end{definition}
It is clear that every $(M,\mathbb P)$-strongly generic condition is $(M,\mathbb P)$-almost strongly generic, and that every  $(M,\mathbb P)$-almost strongly generic condition is $(M,\mathbb P)$-generic. 

\begin{definition}[Almost Strong Properness]
A forcing notion $\mathbb P$ is called \emph{almost strongly proper} (a.s.p. for short), if for every sufficiently large regular cardinal $\theta$, there is a club of countable models $M\prec H_\theta$ containing $[\mathbb P]^\omega$ such that every condition $p\in M\cap\mathbb P$ can be extended to an $(M,\mathbb P)$-almost strongly generic condition.
\end{definition}

The following easy lemma appears frequently in applications.
\begin{lemma}\label{unboundedness}
Assume that $p\in\mathbb P$. Let $M\prec H_\theta$ with $[\mathbb P]^{\omega}\in M$, where $\theta$ is a sufficiently large regular cardinal. Then $S_p$ is $M$-stationary if and only if 
 for every $U\in M$ which is unbounded in $[\mathbb P]^\omega$, we have that $U\cap S_p\cap M\neq\varnothing$.
\end{lemma}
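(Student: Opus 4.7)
My plan splits along the two directions of the biconditional.

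The backward direction is essentially a free pass. Given any algebra $F\in M$, consider the set $C_F$ of $F$-closed members of $[\mathbb P]^\omega$. It is definable from $F$ and $\mathbb P$, both of which are in $M$, so $C_F\in M$. Moreover $C_F$ is a club in $[\mathbb P]^\omega$: any countable $x\subseteq\mathbb P$ has a countable $F$-closure, so $C_F$ is unbounded, and it is closed under countable increasing unions. Applying the hypothesis to the unbounded set $C_F$ produces an $F$-closed $A\in M\cap S_p$, which is exactly what $M$-stationarity asks for.

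For the forward direction, given $U\in M$ unbounded, the plan is to design an algebra $F_U\in M$ that forces any closed countable $A$ to ``contain witnesses from $U$''. Concretely, working inside $M$, for each $x\in\mathbb P$ pick (using a definable well-order of $H_\theta$, or just elementarity) an element $B_x\in U$ with $x\in B_x$, and fix an enumeration $B_x=\{b_x^n:n<\omega\}$. Fold the map $(x,n)\mapsto b_x^n$ into a finitary algebra $F_U\colon[\mathbb P]^{<\omega}\to\mathbb P$ in the standard way (reserving a countable block of elements to encode $n$). Then any countable $F_U$-closed $A$ automatically satisfies $B_x\subseteq A$ for every $x\in A$. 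Applying $M$-stationarity of $S_p$ to $F_U$, pick an $F_U$-closed $A\in M\cap S_p$ with witness $p\rest_A\in A$; since $A$ is countable and $A\in M$, we have $A\subseteq M$, so $x:=p\rest_A\in M$, and therefore $B:=B_x\in U\cap M$ with $x\in B\subseteq A$.

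It remains to check $B\in S_p$. Take $p\rest_B:=x$; if $r\in B$ satisfies $r\leq x$, then $r\in A$ (because $B\subseteq A$), so $r\| p$ by the witnessing property of $x$ for $A$. Hence $B\in U\cap S_p\cap M$, as required.

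The one conceptual obstacle worth flagging is the following. One's first instinct is to replace $U$ by its upward closure $U'=\{A\in[\mathbb P]^\omega:\exists B\in U,\ B\subseteq A\}$, which is a club in $M$, apply $M$-stationarity to land in $U'$, and then shrink. This fails because $S_p$ is not downward-closed in general. What \emph{is} true, and what the construction above exploits, is that $S_p$ is closed under passing to subsets that retain the witness: if $B\subseteq A$ and $p\rest_A\in B$, then $B\in S_p$ with the same witness. The role of $F_U$ is precisely to guarantee that for every $x\in A$—in particular for $x=p\rest_A$—a set $B_x\in U\cap M$ with $x\in B_x\subseteq A$ is available inside $A$.
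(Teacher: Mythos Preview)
Your proof is correct and follows essentially the same strategy as the paper's. The only difference is presentational: where you build an explicit algebra $F_U$ so that closed sets $A$ satisfy $B_x\subseteq A$ for each $x\in A$, the paper instead passes to the club $\overline{U}=\{X:\forall s\in[X]^{<\omega}\ \exists u\in U\ (s\subset u\subseteq X)\}\in M$ and applies $M$-stationarity to land in $\overline{U}\cap S_p\cap M$; in both cases the point is to obtain $A\in S_p\cap M$ together with some $B\in U\cap M$ with $p\rest_A\in B\subseteq A$, so that $B$ inherits membership in $S_p$ via the same witness.
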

\begin{proof}
Suppose $U\in M$ is an unbounded subset of $[\mathbb P]^\omega$. Let $\overline{U}$ be defined as follows:
$$\overline{U}\coloneqq\{X\in [\mathbb P]^\omega: \forall s\in [X]^{<\omega}~\exists u\in U \text{ such that } s\subset u\subseteq X\}.$$
It is obvious that $\overline{U}$ is a club in $[\mathbb P]^\omega$  containing $U$ as a subset. Moreover, $\overline{U}$ is in $M$. Since $S_p$ is  $M$-stationary, and $\overline{U}$ is in $M$, there is some $X\in \overline{U}\cap M\cap S_p$. Thus there is $p\rest_X\in X$ with the property that all its extensions in $X$ are compatible with $p$. Since $X\in\overline{U}\cap M$ and $p\rest_X\in X\cap M$, by elementarity and  definition of $\overline{U}$,  there is some $u\in U\cap M$ such that
$\{p\rest_X\}\subset u\subseteq X$.
Obviously, every extension of $p\rest_X$ in $u$ is compatible with $p$. Consequently,  $ U\cap S_p\cap M \neq\varnothing$. The other direction is trivial.
\end{proof}

Recall that a forcing notion $\mathbb P$ has the $\omega_1$-approximation property if for every $V$-generic filter $G\subseteq\mathbb P$, the pair $(V,V[G])$ has the $\omega_1$-approximation property, i.e., every set of ordinals $x\in V[G]$ such that $x\cap a\in V$ for every countable $a\in V$, should be  in $V$ (see \cite{Hamkins2003}).  The following is well-known and easy to prove.

\begin{lemma}\label{no new branch}
No forcing  with the $\omega_1$-approximation property can add
new cofinal branches through  trees of height $\omega_1$.
\end{lemma}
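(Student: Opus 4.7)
The plan is to argue by contradiction. Assume $\mathbb{P}$ has the $\omega_1$-approximation property, let $T \in V$ be a tree of height $\omega_1$, and suppose toward a contradiction that $b \in V[G]\setminus V$ is a cofinal branch of $T$. Fix in $V$ a bijection $f : T \to |T|$ and set $\tilde b \coloneqq f[b] \in V[G]$; then $\tilde b \notin V$, since otherwise $b = f^{-1}[\tilde b]$ would lie in $V$. The goal is to show that $\tilde b \cap a \in V$ for every countable $a \in V$, so that the approximation property forces $\tilde b \in V$, a contradiction.

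The crucial observation is that every proper initial segment of $b$ lies in $V$. For each $\alpha < \omega_1$, cofinality of $b$ provides a unique node $b(\alpha)$ of $b$ at level $\alpha$. Since $b(\alpha) \in T \in V$, it is itself an element of $V$, and hence the set $P_\alpha \coloneqq \{ t \in T : t <_T b(\alpha) \}$ is computed in $V$. But $P_\alpha$ is precisely the initial segment of $b$ below level $\alpha$. Now let $a \in V$ be countable; replacing $a$ by $f^{-1}[a \cap |T|]$ we may treat $a$ as a countable subset of $T$ lying in $V$. Pick any countable $\alpha < \omega_1$ strictly greater than every level attained by an element of $a$. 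Then
$$ b \cap a = P_\alpha \cap a,$$
which is the intersection of two sets in $V$ and therefore lies in $V$. Pushing forward by $f$ gives $\tilde b \cap a \in V$ for every countable $a \in V$, and the $\omega_1$-approximation property then forces $\tilde b \in V$, contradicting the choice of $b$.

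There is no serious obstacle here; the two small points to watch are the coding of $b$ as a set of ordinals, so that the approximation property (stated for sets of ordinals) can actually be applied, and the choice of bounding level $\alpha$, which only uses that $a$ is countable. The substantive content is the initial-segment observation, which turns on the fact that $T$ lives in the ground model and that a single node of $b$ determines every earlier node of $b$.
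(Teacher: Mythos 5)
Your proof is correct, and it is the standard argument; the paper itself leaves this lemma unproved (it is stated as well-known, with an empty proof environment), so there is nothing to diverge from. Two minor points are worth making explicit: you use that a cofinal branch is a downward-closed chain meeting every level below $\omega_1$ (the usual convention, and needed for the identity $b\cap a=P_\alpha\cap a$ rather than just an inclusion), and your choice of the bounding level $\alpha$ reads ``countable $a\in V$'' in the approximation property as countable in $V$, which matches the paper's formulation.
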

\begin{proof}[\nopunct]

\end{proof}

\begin{proposition}\label{asp approx}
Every a.s.p. forcing has the $\omega_1$-approximation property.
\end{proposition}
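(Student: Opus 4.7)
The plan is to mimic the well-known proof that strongly proper forcings have the $\omega_1$-approximation property, using a countable $X \in M \cap S_{q^{*}}$ furnished by $M$-stationarity in place of the canonical restriction $q^{*}\rest_{M}$ (which almost strong genericity need not provide).

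Suppose for a contradiction that $p \Vdash \dot{x}$ is a set of ordinals with $\dot{x} \cap a \in V$ for every countable $a \in V$, yet $p \Vdash \dot{x} \notin V$. Pick a countable $M \prec H_{\theta}$ witnessing almost strong properness, with $p, \mathbb{P}, \dot{x} \in M$, and let $q \leq p$ be $(M,\mathbb{P})$-almost strongly generic. Since $M \cap \ORD$ is a countable set in $V$, the approximation hypothesis produces $q^{*} \leq q$ and $y \in V$ with $q^{*} \Vdash \dot{x} \cap (M \cap \ORD) = y$; we still have $q^{*} \Vdash \dot{x} \notin V$.

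Inside $M$, I would fix a Skolem-type algebra $F$ on $\mathbb{P}$ whose $F$-closed countable subsets $X$ contain, for each $s \in X$, canonically chosen extensions $s_{1}, s_{2} \leq s$ witnessing a disagreement of the form $s_{1} \Vdash \alpha \in \dot{x}$ and $s_{2} \Vdash \alpha \notin \dot{x}$, whenever such a pair exists. Since $q$ is almost strongly generic and $q^{*} \leq q$, the set $S_{q^{*}}$ is $M$-stationary, hence there is an $F$-closed $X \in M \cap S_{q^{*}}$. As $X$ is countable and a member of $M$, an enumeration of $X$ inside $M$ witnesses $X \subseteq M$. Set $r := q^{*}\rest_{X} \in X$.

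Next I would argue that $r$ cannot decide $\dot{x}$ completely: if $r \Vdash \dot{x} = z$, elementarity places $z \in M \subseteq V$, and any common extension of $r$ and $q^{*}$ (which exists because $r \in X \in S_{q^{*}}$) would simultaneously force $\dot{x} = z$ and $\dot{x} \notin V$. Hence $F$-closure supplies $s_{1}, s_{2} \in X$ with $s_{1}, s_{2} \leq r$ and some $\alpha$ witnessing the disagreement; since $s_{1}, s_{2}, \dot{x} \in M$, elementarity lets us take $\alpha \in M$. Both $s_{1}$ and $s_{2}$ lie in $X$ below $r = q^{*}\rest_{X}$, so each is compatible with $q^{*}$; combining a common extension with $q^{*} \Vdash \dot{x} \cap (M \cap \ORD) = y$ at the ordinal $\alpha \in M \cap \ORD$ forces $\alpha \in y$ (from the $s_{1}$-side) and $\alpha \notin y$ (from the $s_{2}$-side), the desired contradiction.

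The principal obstacle is the design of $F$ inside $M$: it must guarantee that the disagreement pair lives inside $X$ itself and not merely inside $\mathbb{P}$, because it is precisely the containment $X \subseteq M$ that powers the final elementarity step producing the ordinal witness $\alpha \in M$. Once $F$ is set up correctly, the rest is a direct transcription of the strongly proper template, with $X$ playing the structural role of $M$.
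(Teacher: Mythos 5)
Your argument is correct: the reduction to deciding $\dot x\cap(M\cap\ORD)$ below an almost strongly generic $q$, the passage to an $F$-closed $X\in M\cap S_{q^*}$ with $X\subseteq M$, and the final contradiction obtained by pushing the two disagreeing extensions of $q^*\rest_X$ back against $q^*$ all go through, and you correctly isolated the only delicate point (forcing the disagreement pair to lie in $X$ itself). The route differs from the paper's in how that point is handled. The paper fixes two cardinals $\theta<\theta^*$, takes $M\prec H_{\theta^*}$, and applies \cref{unboundedness} to the unbounded family $\mathcal U=\{N\cap\mathbb P: \dot f,\gamma,\mathbb P\in N\prec H_\theta\}$; the trace of an elementary submodel $N\in M$ then plays the role of your $X$, and the elementarity of $N$ produces the disagreement pair (and the ordinal $\zeta\in N\subseteq M$) with no extra design work. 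You instead work straight from the definition of $M$-stationarity, building the witnessing extensions into a Skolem-type algebra $F\in M$ and using only $M$'s elementarity to pull the witnessing ordinal $\alpha$ into $M$. What your version buys is economy of machinery: no second cardinal, no elementary submodels of a smaller $H_\theta$, and no appeal to \cref{unboundedness}. What the paper's version buys is that the closure comes for free from elementarity (no algebra to design), and it reuses \cref{unboundedness}, which is needed anyway for \cref{asp ccc} and \cref{genericity1}. Two routine points you should make explicit if you write this up: the two canonical witness functions (defined from a well-order of $\mathbb P$ lying in $M$) must be folded into a single algebra $F\in M$, which is standard since the closure points of finitely many finitary functions contain the closure points of a single one; and in the case that no disagreement pair exists below $r=q^*\rest_X$, one should note that $r$ then decides every statement $\alpha\in\dot x$, hence forces $\dot x=\check z$ for some ground-model set $z$, which is where the assumption $q^*\Vdash\dot x\notin V$ is used.
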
 
\begin{proof}
Assume that $\mathbb P$ is an a.s.p. forcing. Suppose that $p_0\in\mathbb P$ forces  $\dot{f}:\gamma \rightarrow 2 $ to be a countably approximated function, where $\gamma$ is an ordinal. We shall show that the set of conditions  deciding $\dot f$ is dense below $p_0$. Fix $p\leq p_0$. Suppose that $\theta<\theta^*$ are sufficiently large regular cardinals. Let $M\prec H_{\theta^*}$ be countable and contain the relevant objects, in particular $p$. Let $q\leq p$ be an $(M,\mathbb P)$-almost strongly generic condition. One can extend $q$ further to make sure that $q\Vdash \dot f\rest_M\in V$. Therefore, We may assume without loss of generality that there is, in $V$, some function $g:M\cap\gamma\rightarrow 2$ such that $q\Vdash g=\dot f\rest_M$. 
Let 
$$\mathcal U=\{N\cap \mathbb P: \mathbb P,\gamma,\dot{f}\in N\prec H_\theta \text{ and  }N \text{ is countable}\}.$$

$\mathcal U$ is unbounded in $[\mathbb P]^\omega$ and belongs to $M$. We now use the almost strong genericity of $q$ and \cref{unboundedness} to pick some $N\prec H_\theta$ in $M$ with $\dot{f},\gamma,\mathbb P\in N$ for which there exists $q\rest_N$ in $N\cap\mathbb P$ so that every condition in $N$ extending $q\rest_N$ is compatible with $q$. We are done if $q\rest_N$
 decides $\dot f$. Suppose this is not the case thus there are, by elementarity, $\zeta\in\gamma\cap N$ (and hence in $M$)
 and $q_0,q_1\in N$ extending $q\rest_N$ such that
 $q_0\Vdash \dot f(\zeta)=0$ and $q_1\Vdash \dot f(\zeta)=1$, but this is impossible since $q\Vdash \dot f(\zeta)=g(\zeta)$; otherwise  either $q_0$ or $q_1$ is incompatible with $q$, a contradiction!
 \end{proof}

\begin{proposition}\label{asp ccc}
Every a.s.p. forcing preserves c.c.c-ness.
\end{proposition}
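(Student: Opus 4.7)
Plan: My plan is to mimic the proof of \cref{asp approx}, with the antichain $\dot{A}$ playing the role of the function $\dot{f}$. I argue by contradiction: suppose some $p_0 \in \mathbb{P}$ forces $\dot{A} = \{\dot{a}_\alpha : \alpha < \omega_1\}$ to be an uncountable antichain in the c.c.c. forcing $\mathbb{Q}$. Pick a countable $M \prec H_\theta$ from the a.s.p.-witnessing club containing $p_0, \mathbb{P}, \mathbb{Q}, \dot{A}$ and $[\mathbb{P}]^\omega$, and let $q \leq p_0$ be $(M, \mathbb{P})$-almost strongly generic.

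The key analogue of the step ``$q \Vdash \dot{f}\!\upharpoonright\! M = g \in V$'' is to first extend $q$ to some $q^{*}$ forcing $\dot{A} \cap M = A^{*}$ for a specific $A^{*} \in V$. This should rest on combining the $(M,\mathbb{P})$-genericity of $q$ (so that the decisions ``$c \in \dot{A}$'' for each $c \in M \cap \mathbb{Q}$ are made through $M$-dense predense sets) with the c.c.c.-ness of $\mathbb{Q}$ (so that the possible values of $\dot{A} \cap M$ form a $V$-set of countable antichain subsets of $M \cap \mathbb{Q}$) and the $\omega_1$-approximation property of $\mathbb{P}$ from \cref{asp approx}. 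I then pick $\alpha^{*} \in \omega_1 \setminus M$ and extend $q^{*}$ further to $q^{**}$ with $q^{**} \Vdash \dot{a}_{\alpha^{*}} = b^{*}$ for some $b^{*} \in \mathbb{Q}$.

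Next I apply the $M$-stationarity of $S_{q^{**}}$ together with \cref{unboundedness} to the unbounded-in-$[\mathbb{P}]^\omega$ set
\[
 U = \{N \cap \mathbb{P} : N \prec H_\theta \text{ is countable and contains } p_0,\mathbb{P},\mathbb{Q},\dot{A}\} \in M,
\]
obtaining some $X = N \cap \mathbb{P} \in S_{q^{**}} \cap M$ with restriction $q^{**}\!\upharpoonright\! X \in X$ such that every extension of $q^{**}\!\upharpoonright\! X$ in $X$ is compatible with $q^{**}$. If $q^{**}\!\upharpoonright\! X$ does not already decide the relevant data about $\dot{A} \cap M$, then by elementarity in $N$ there exist two $N$-conditions $r_{0},r_{1} \leq q^{**}\!\upharpoonright\! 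X$ forcing conflicting values of some $\dot{a}_\beta$ for $\beta \in \omega_1 \cap N \subseteq M$. Both $r_{0}$ and $r_{1}$ are compatible with $q^{**}$ by the $X$-property, and the resulting common extensions with $q^{**}$ all force $\dot{A} \cap M = A^{*}$ (inherited from $q^{**} \leq q^{*}$) while also forcing two distinct values for the single entry $\dot{a}_\beta \in \dot{A} \cap M$, the desired contradiction.

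The main obstacle, and the place where genuinely new work is required beyond copying the proof of \cref{asp approx}, is the preliminary step of forcing $\dot{A} \cap M \in V$. Unlike in \cref{asp approx}, where countable approximation of $\dot{f}$ was assumed as a hypothesis, here it must be derived: the characteristic function of $\dot{A}$ on the countable domain $M \cap \mathbb{Q}$ is determined, via $(M,\mathbb{P})$-genericity, by membership in $M$-dense antichains predense below $q$, and using the $\omega_1$-approximation property of $\mathbb{P}$ together with the constraints imposed by the c.c.c.-ness of $\mathbb{Q}$ on which subsets of $M \cap \mathbb{Q}$ can actually be antichains, one concludes that this characteristic function lies in $V$; a standard density argument then lets us extend $q$ to pin down its value as $A^{*}$.
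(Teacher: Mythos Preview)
Your proposal has a genuine gap at the step you yourself flag as ``the main obstacle'': forcing $\dot A\cap M$ to equal a fixed ground-model set $A^{*}$. The $\omega_1$-approximation property (\cref{asp approx}) applies only to sets $x$ with $x\cap a\in V$ for \emph{every} countable $a\in V$; nothing here tells you that the countable pieces of $\dot A$ lie in $V$. Indeed, an a.s.p.\ forcing (Cohen forcing, say) can certainly add a new subset of a countable ground-model set, so there is no reason a single condition below $q$ should decide all of $\dot A\cap M$. Your appeal to $(M,\mathbb P)$-genericity only gives that each individual ``$c\in\dot A$?'' is decided by some condition in $M$ meeting the generic, not that one condition decides them all; and the c.c.c.\ of $\mathbb Q$ constrains which subsets of $\mathbb Q$ are antichains, but does not force $\dot A\cap M$ into the ground model. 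Moreover, even granting step~3, your endgame is incomplete: you never explain how the eventual configuration contradicts the c.c.c.\ of $\mathbb Q$ in $V$; you only reach a contradiction with the assumption of step~3 itself, which shows $q^{**}\!\restriction_X$ decides $\dot A\cap M$, and then the argument stops.

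The paper's route is quite different and avoids this difficulty entirely. Rather than analysing a putative uncountable antichain, it shows directly that for any $(M,\mathbb P)$-almost strongly generic $p$ and any $q\in\mathbb Q$, the pair $(p,q)$ is $(M,\mathbb P\times\mathbb Q)$-generic. Given dense $D\in M$, one uses \cref{unboundedness} to find $N\in M$ and $p\!\restriction_N\in N$, then sets $E=\{q'\in\mathbb Q:\exists\,p'\leq p\!\restriction_N,\ (p',q')\in D\}$, a dense subset of $\mathbb Q$ lying in $N$. \emph{Here} is where c.c.c.\ is actually used: since $\mathbb Q$ is c.c.c., some $q'\in E\cap N$ is compatible with $q$; the witnessing $p'\in N$ is then compatible with $p$ by the restriction property, and $(p',q')\in D\cap M$. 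This is the missing idea in your approach: the c.c.c.\ is leveraged not to pin down $\dot A$ but to guarantee that a dense set in a small model meets an external $q$.
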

\begin{proof}
Suppose  $\mathbb Q$ is a c.c.c forcing, and that $\mathbb P$ is an a.s.p. forcing. Choose  sufficiently large regular cardinals $\theta<\theta^*$ with $\mathbb P,\mathbb Q\in H_{\theta}$. Suppose $\mathcal C$ is a club in $[H_{\theta^*}]^\omega$  witnessing the almost strong properness of $\mathbb P$. Let $M\prec H_{\theta^*}$ in $\mathcal C$ contain
$\mathbb P$ and $\mathbb Q$. It is enough to show that for every $q\in \mathbb Q$ and every $(M,\mathbb P)$-almost strongly generic condition $p\in\mathbb P$, $(p,q)$ is $(M,\mathbb P\times\mathbb \mathbb Q)$-generic\footnote{Recall that a forcing notion is c.c.c if and only if the maximal condition is generic for unboundedly many elementary submodel in some $H_\theta$ big enough.}. Moreover, 
to see this, it is enough to show that whenever $D\in M$ is a dense subset of $\mathbb P\times\mathbb Q$,  $(p,q)$ is compatible with some condition in $D\cap M$. Thus
let $D\in M$ be a dense subset of $\mathbb P\times\mathbb Q$. Consider
$$\mathcal U=\{N\cap \mathbb P: \mathbb P,\mathbb Q,D\in N\prec H_\theta\text{ and  }N \text{ is countable}\}.$$

$\mathcal U$ is unbounded in $[\mathbb P]^\omega$ and belongs to $M$. By the almost strong genericity of $p$ and \cref{unboundedness}, one can choose $N\prec H_\theta$ in $M$ with $\mathbb P,\mathbb Q,D\in N$  so that there is $p\rest_N\in N\cap\mathbb P$, for which every stronger condition in $N\cap\mathbb P$ is compatible with $p$.
Set
$$E=\{q'\in\mathbb Q: \exists p'\leq p\restriction_N \text{ with } (p',q')\in D \}.$$
 It is easily seen that $E$ is a dense subset of
$\mathbb Q$. Clearly $E$  is in $N$, and hence in $M$. Now since $\mathbb Q$ is c.c.c., there is $q'\in N\cap E$ such that $q'$ is compatible with $q$. By the definition of $E$ and the elementarity of $N$, there is $p'\leq p\rest_N$ in $N$ so that
$(p',q')\in D\cap N$, which in turn implies that $(p',q') $ is compatible with $(p,q)$. Notice that
$(p',q')\in N\cap D\subseteq M\cap D$.

\end{proof}

\begin{corollary}
Every a.s.p. forcing preserves Suslinity.
\end{corollary}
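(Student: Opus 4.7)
The plan is to read the statement as a one-line corollary of the two preceding propositions together with \cref{no new branch}. Fix an a.s.p.\ forcing $\mathbb P$, a Suslin tree $T \in V$, and a $V$-generic filter $G \subseteq \mathbb P$. I shall use the standard characterisation that a tree of height $\omega_1$ is Suslin if and only if it is c.c.c.\ as a forcing (under the reverse tree order) and has no cofinal branch; indeed, any uncountable chain in a height-$\omega_1$ tree extends to a cofinal branch. Note also that since every almost strongly generic condition is in particular $(M,\mathbb P)$-generic, $\mathbb P$ is proper and hence preserves $\omega_1$, so $T$ retains height $\omega_1$ in $V[G]$.

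For the antichain condition, regard $T$ as a forcing notion in the usual way. Incompatibility in this forcing is exactly incomparability in the tree, so antichains in the tree coincide with antichains in the forcing, and Suslinity of $T$ amounts, in particular, to $T$ being c.c.c.\ as a forcing in $V$. Applying \cref{asp ccc} with $\mathbb Q = T$, $T$ is still c.c.c.\ in $V[G]$, so $T$ has no uncountable antichain in the extension.

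For the branch condition, \cref{asp approx} gives that $\mathbb P$ has the $\omega_1$-approximation property, and \cref{no new branch} then ensures that $\mathbb P$ adds no new cofinal branches through trees of height $\omega_1$, in particular through $T$. Since $T$ had no cofinal branch in $V$, it has none in $V[G]$ either. Combined with the previous paragraph, $T$ remains a Suslin tree in $V[G]$.

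The main ``obstacle'' is really just bookkeeping: one needs the translation between tree-theoretic and forcing-theoretic vocabulary (antichains versus incompatible conditions, and uncountable chains versus cofinal branches) together with the observation that $\omega_1$ is preserved. Once these are in place, the proof is immediate from \cref{asp approx}, \cref{asp ccc}, and \cref{no new branch}, with no genuine technical content beyond what those results already supply.
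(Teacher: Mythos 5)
Your proof is correct and follows essentially the same route as the paper: the $\omega_1$-approximation property (via \cref{asp approx} and \cref{no new branch}) rules out new cofinal branches, and \cref{asp ccc} applied to the tree viewed as a c.c.c.\ forcing rules out new uncountable antichains. The extra bookkeeping you supply (preservation of $\omega_1$ by properness and the chain/branch translation) is accurate detail that the paper leaves implicit.
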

\begin{proof}
Assume that $\mathbb P$ is an a.s.p. forcing.  Suppose $S$ is a Suslin tree. Let $G\subseteq \mathbb P$ be $V$-generic filter. By \cref{no new branch} and \cref{asp approx}, $S$ does not have cofinal branches in $V[G]$. Now,   consider the corresponding Suslin forcing of $S$, say $\mathbb S$ which is  c.c.c.  By \cref{asp ccc}, $\mathbb S$ remains c.c.c., and hence $S$ is a  Suslin tree in $V[G]$.
\end{proof}

\section{Mapping Reflection Principle}

 \begin{definition}[Ellentuck Topology]
Let $X$ be an uncountable set. The \emph{Ellentuck topology} on $[X]^{\leq\omega}$ is the topology generated by the following sets as basic open sets.
$$[a,A]\coloneqq\{x\subseteq X: a\subseteq x\subseteq A\}, \text{ where } a \text{ is finite and } A\subseteq X \text{ is countable }.$$
\end{definition}
The following definitions are due to Moore, \cite{MooreMRP}.
\begin{definition}[$M$-stationarity]
Suppose $M$ and $X$ are  sets. A set $\Sigma\subseteq [X]^{\leq\omega}$ 
 is called \emph{$M$-stationary} if for every algebra $F\in M$ over $X$, there is some countable set
$A\in M\cap \Sigma$ closed under $F$.
\end{definition}

\begin{definition}[Open and Stationary Mapping]
 A function $\Sigma$ is called \emph{open and stationary mapping} if there are a regular cardinal 
 $\theta=\theta_\Sigma$ and an uncountable set $X=X_\Sigma$ with $[X]^{\omega}\in H_\theta$ such that:
\begin{enumerate}
\item ${\rm dom}(\Sigma)$ is the collection of countable elementary submodels of $H_\theta$ containing $[X]^{\omega}$. 
\item For each $M\in{\rm dom}(\Sigma)$, $\Sigma(M)$ is  $M$-stationary, and also open in $[X]^{\leq\omega}$
 with respect to the Ellentuck topology.
\end{enumerate}

\end{definition}
\begin{definition}[Reflection]
An open stationary mapping $\Sigma$  \emph{reflects} if there is a continuous $\in$-chain $\langle M_\xi:\xi<\omega_1\rangle$
of models in ${\rm dom}(\Sigma)$ such that for every non-zero limit ordinal $\xi<\omega_1$, there is $\zeta<\xi$ so that for every $\eta\in\xi\setminus \zeta$,
$M_\eta\cap X \in \Sigma(M_\xi)$. The sequence $\langle M_\xi:\xi<\omega_1\rangle$ is called a \emph{reflecting sequence} for $\Sigma$.
\end{definition}

\begin{definition}[$\rm MRP$]\label{MRP}
The Mapping Reflection Principle $(\rm MRP)$ states that every open stationary mapping reflects.
\end{definition}

The following can be proved in the same way as \cref{unboundedness}.
\begin{lemma}\label{unboundedness 2}
Suppose $\Sigma\subseteq [X]^{\leq \omega}$ is open. Then $\Sigma$ is $M$-stationary if and only if for every $U\in M$ which is unbounded in $[X]^\omega$, we have that $U\cap \Sigma\cap M\neq\varnothing$.
\end{lemma}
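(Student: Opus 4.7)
The plan is to mimic the proof of \cref{unboundedness}, with one new ingredient: the role played there by the witness $p\rest_X$ will here be played by a finite ``stem'' furnished by the Ellentuck openness of $\Sigma$.

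The nontrivial (``only if'') direction goes as follows. I would fix $U \in M$ unbounded in $[X]^\omega$ and form
\[
\overline{U} \coloneqq \{A \in [X]^\omega : \forall s \in [A]^{<\omega}\ \exists u \in U \text{ with } s \subseteq u \subseteq A\}.
\]
Exactly as in the previous lemma, $\overline{U}$ is a club in $[X]^\omega$ containing $U$ as a subset, and $\overline{U} \in M$ because it is defined from the parameter $U \in M$. Since $\Sigma$ is $M$-stationary and $\overline{U}$ is a club coded in $M$, one obtains some $A \in \overline{U} \cap M \cap \Sigma$.

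Now the openness of $\Sigma$ enters: since $A \in \Sigma$ and $\Sigma$ is open in the Ellentuck topology, there is a finite $a \subseteq A$ with $[a,A] \subseteq \Sigma$. Because $A$ is a countable element of $M \prec H_\theta$, we have $A \subseteq M$, hence $a \in M$. Applying elementarity to the statement $A \in \overline{U}$ with the parameter $a \in [A]^{<\omega} \cap M$ then produces some $u \in U \cap M$ satisfying $a \subseteq u \subseteq A$. Since $u \in [a,A] \subseteq \Sigma$, we conclude $u \in U \cap \Sigma \cap M$.

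The reverse direction is immediate: given any algebra $F \in M$ on $X$, the set $U$ of $F$-closed countable subsets of $X$ is an unbounded subset of $[X]^\omega$ belonging to $M$, and the hypothesis supplies an element of $U \cap \Sigma \cap M$ which witnesses $M$-stationarity of $\Sigma$ against $F$. I do not anticipate a real obstacle; the only point to notice is that Ellentuck-openness is precisely what replaces the explicit compatibility witness of \cref{unboundedness} by a finite subset of $A$, after which the argument is a direct transcription.
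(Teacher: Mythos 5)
Your proposal is correct and is exactly the adaptation the paper intends: Lemma~\ref{unboundedness 2} is stated with its proof omitted as being ``the same as'' Lemma~\ref{unboundedness}, and your argument transcribes that proof with the Ellentuck-open stem $a\subseteq A$ (noting $A\subseteq M$, so $a\in M$) playing the role of the witness $p\rest_X$, plus the same trivial converse via the club of $F$-closed sets.
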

\begin{proof}[\nopunct]

\end{proof}
\begin{theorem}\label{main theorem}
 Suppose that $\Sigma$ is an open stationary mapping. Then there is an a.s.p.   forcing $\mathbb P_\Sigma$ with finite conditions which adds  a reflecting sequence for $\Sigma$.
\end{theorem}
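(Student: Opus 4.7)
The plan is to define $\mathbb{P}_\Sigma$ as a finite side-condition forcing, then verify that its generic filter produces a reflecting sequence for $\Sigma$ and that the forcing is almost strongly proper. Set $\theta := \theta_\Sigma$ and $X := X_\Sigma$. A condition will be a triple $p = (\mathcal{N}_p, \zeta^p, a^p)$, where $\mathcal{N}_p$ is a finite $\in$-chain of countable $M \prec H_\theta$ with $[X]^\omega \in M$, each $\zeta^p_M < M \cap \omega_1$ is a \emph{reflection threshold} attached to $M \in \mathcal{N}_p$, and for every pair $N \in M$ from $\mathcal{N}_p$ with $\zeta^p_M \leq N \cap \omega_1$, $a^p_{N,M}$ is a finite subset of $N \cap X$ witnessing via the Ellentuck topology that $[a^p_{N,M}, N \cap X] \subseteq \Sigma(M)$. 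The order is componentwise refinement, and each condition is manifestly finite. Routine density arguments should then yield that, writing $\mathcal{N}^G := \bigcup_{p \in G} \mathcal{N}_p$ and extending by continuity, the generic produces a sequence $\langle M_\xi : \xi < \omega_1 \rangle$ that reflects $\Sigma$: at every limit $\xi$ realised in the chain, the stored threshold and witnesses force $M_\eta \cap X \in \Sigma(M_\xi)$ whenever $\eta \in [\zeta_{M_\xi}, \xi)$.

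The core of the proof is almost strong properness. Fix a countable $M^* \prec H_{\theta^*}$ containing $\mathbb{P}_\Sigma$, $\Sigma$ and $[X]^\omega$, and let $p \in M^* \cap \mathbb{P}_\Sigma$. I extend $p$ to $q \leq p$ by appending $M^* \cap H_\theta$ at the top of $\mathcal{N}_p$, choosing its threshold $\zeta^q_{M^* \cap H_\theta}$ above every $N \cap \omega_1$ for $N \in \mathcal{N}_p$, so that no new reflection witness is required to form $q$. To see $q$ is $(M^*, \mathbb{P}_\Sigma)$-almost strongly generic, fix $r \leq q$; by \cref{unboundedness} it suffices to produce, for each unbounded $U \in M^*$ in $[\mathbb{P}_\Sigma]^\omega$, some $Y \in U \cap M^* \cap S_r$. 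From $U$ one extracts inside $M^*$ an associated unbounded family in $[X]^\omega$ and applies \cref{unboundedness 2} to $\Sigma(M^* \cap H_\theta)$ to obtain a countable $N^* \prec H_\theta$ in $M^*$ whose trace realises an Ellentuck basic open $[b, N^* \cap X] \subseteq \Sigma(M^* \cap H_\theta)$. Setting $Y := N^* \cap \mathbb{P}_\Sigma \in U \cap M^*$, one defines $r \rest_Y \in Y$ to be the restriction of $r$ to the models of $\mathcal{N}_r$ lying in $N^*$, with the corresponding thresholds and witnesses inherited.

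The principal obstacle is showing that every $s \leq r \rest_Y$ with $s \in Y$ is compatible with $r$. The amalgam has underlying chain $\mathcal{N}_s \cup \mathcal{N}_r$ (which is an $\in$-chain by standard side-condition manipulations, since $\mathcal{N}_s \subseteq N^* \cap H_\theta$ while $\mathcal{N}_r \setminus \mathcal{N}_{r\rest_Y}$ sits $\in$-above $N^*$), preserving thresholds and existing witnesses. The only non-trivial new reflection witnesses required are for cross-pairs $(N', M^* \cap H_\theta)$ with $N' \in \mathcal{N}_s \setminus \mathcal{N}_{r\rest_Y}$. The Ellentuck openness of $\Sigma(M^* \cap H_\theta)$ is decisive here: since $N' \cap X \subseteq N^* \cap X$ and $[b, N^* \cap X] \subseteq \Sigma(M^* \cap H_\theta)$, any finite $a$ with $b \subseteq a \subseteq N' \cap X$ satisfies $[a, N' \cap X] \subseteq \Sigma(M^* \cap H_\theta)$, supplying the missing witness. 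Cross-pairs involving models $M \in \mathcal{N}_r$ strictly above $M^* \cap H_\theta$ are vacuous because such $M$ necessarily carry thresholds above $M^* \cap \omega_1$, while cross-pairs involving models in $\mathcal{N}_p$ inherit their witnesses from $p \in M^*$. This produces the common extension and completes the verification of almost strong properness.
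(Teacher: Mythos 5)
Your poset replaces the paper's two auxiliary components (the regressive finite sets $f_p$ and, crucially, the decoration $d_p$) by an ordinal threshold $\zeta^p_M$ plus explicit Ellentuck witnesses attached to pairs, and the absence of anything playing the role of $d_p$ is where the argument breaks. In your compatibility step you must show that every new model $N'\in\mathcal N_s\setminus\mathcal N_{r\rest_Y}$ with $N'\cap\omega_1\geq\zeta_M$ satisfies $N'\cap X\in\Sigma(M^*\cap H_\theta)$; you derive this from ``any finite $a$ with $b\subseteq a\subseteq N'\cap X$'', which silently assumes $b\subseteq N'$. Nothing in $r\rest_Y$ forces that: $s$ is an arbitrary extension inside $N^*$ and may add models of large $\omega_1$-trace that miss $b$, and $\Sigma(M^*\cap H_\theta)$ is only stationary, so such $N'\cap X$ need not lie in it. In the paper this is exactly the job of the decoration: in \cref{genericity1} the restricted condition is modified to $d_{p^*}(P)=d_p(P)\cup b_{P^+}$, and clause (2) of \cref{mainforcingSigma} ($P\in N\Rightarrow d(P)\in N$) then forces every model later inserted into a gap to contain the relevant witnesses, so openness applies. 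The same omission invalidates your dismissal of cross-pairs $(N',Q)$ with $Q$ strictly above $M^*\cap H_\theta$: an arbitrary $r\leq q$ may place models above $M^*\cap H_\theta$ with small thresholds, so these pairs are not vacuous; the paper handles them through the family $\mathfrak X$ and the sets $b^Q_P$ collected into $B$ and pushed into the decorations.

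There is a second genuine gap at the end: ``extending by continuity'' does not produce a reflecting sequence. If the generic chain is not itself continuous and you insert unions at the missing limits, those inserted models never occurred in any condition, carry no threshold or witness obligations, and there is no reason a tail of the $M_\eta\cap X$ lands in $\Sigma$ of such a union; the definition of reflection requires precisely that. So you must prove that $\mathcal N^G$ is continuous, which is a density argument in its own right (\cref{cont} in the paper), and there again the mechanism that catches an arbitrary $x\in M$ inside some model of the chain below $M$ is the decoration together with clause (2). In short, the threshold-plus-witness formalization is not unreasonable, but without a component that transmits finite sets into all subsequently added models, both the almost-strong-genericity amalgamation and the continuity of the generic chain are unproved, and the former is false as stated.
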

The rest of this section is devoted to the proof of \cref{main theorem}.
Suppose $\Sigma$ is an open stationary mapping. Let $X=X_\Sigma$ and $\theta=\theta_\Sigma$.
\begin{definition}[Forcing Poset]\label{mainforcingSigma}
We let $\mathbb P_\Sigma$ consist of triples $p=(\mathcal M_p,d_p,f_p)$, where
\begin{enumerate}
\item\label{it1-maindef} $\mathcal M_p$ is a finite $\in$-chain of models in ${\rm dom}(\Sigma)$.
\item\label{it2-maindef} $d_p:\mathcal M_p\rightarrow [H_\theta]^{<\omega}$ is a function such that if $M\in N$, then $d_p(M)\in N$.
\item\label{it3-maindef} $f_p$  is a regressive function on $\mathcal M_p$, i.e., for every $M\in\mathcal M_p$, $f_p(M)\in [M]^{<\omega}$, such that whenever 
$P\in M$ are $\mathcal M_p$ and $f_p(M)\in P$, then $P\cap X\in\Sigma(M)$.
\end{enumerate}
We equip $\mathbb P_\Sigma$ with the following ordering. We say $q$ is stronger than $p$ and write $q\leq p$, if 
\begin{enumerate}
    
\item $\mathcal M_p\subseteq \mathcal M_q$.
\item For each $M\in\mathcal M_p$, $d_p(M)\subseteq d_q(M)$.
\item $f_p\subseteq f_q$.
\end{enumerate}
\end{definition}

For the sake of convenience, we let $\varnothing$ be in $\mathcal M_p$, for every $p\in \mathbb P_\Sigma$, and leave $f_p(\varnothing)$ undefined.
\begin{lemma}\label{topMshort}
Suppose $p$ is a condition in $\mathbb P_\Sigma$. Let $M$ be an element of ${\rm dom}(\Sigma)$ containing  $p$. Then 
there is a condition $p^M\leq p$ such that $M\in\mathcal M_{p^M}$.
\end{lemma}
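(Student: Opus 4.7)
The plan is to form $p^M$ by placing $M$ on top of $\mathcal M_p$ and extending $d_p,f_p$ canonically: set
\[
p^M=\bigl(\mathcal M_p\cup\{M\},\ d_p\cup\{(M,\varnothing)\},\ f_p\cup\{(M,f^\ast)\}\bigr)
\]
for a suitably chosen $f^\ast\in[M]^{<\omega}$. The relation $p^M\le p$ is then automatic, so only the three clauses of \cref{mainforcingSigma} need checking. Since $p\in M$ and $\mathcal M_p$ is finite, every $N\in\mathcal M_p$ already lies in $M$, so appending $M$ at the top preserves the $\in$-chain, giving clause~(1). For clause~(2) the only new instances are pairs $(N,M)$ with $N\in\mathcal M_p$, and $d_p(N)\in M$ is automatic from $d_p,N\in M$; the value $d_{p^M}(M)=\varnothing$ is unconstrained because $M$ is the top.

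The core of the argument is the choice of $f^\ast$ witnessing clause~(3). The requirement is that whenever $P\in\mathcal M_p$ satisfies $f^\ast\in P$, one has $P\cap X\in\Sigma(M)$; I will arrange instead that $f^\ast\notin P$ for every $P\in\mathcal M_p\setminus\{\varnothing\}$, which renders the hypothesis vacuous. If $\mathcal M_p=\{\varnothing\}$ take $f^\ast=\varnothing$. Otherwise let $P^\ast$ be the $\in$-maximum of $\mathcal M_p\setminus\{\varnothing\}$. Since $P^\ast$ is a countable element of the elementary submodel $M\prec H_\theta$, there is an enumeration $\omega\to P^\ast$ inside $M$, so $P^\ast\subseteq M$; Foundation then yields $P^\ast\subsetneq M$. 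Pick $\alpha\in M\setminus P^\ast$ and set $f^\ast=\{\alpha\}$. The same elementarity argument applied along the $\in$-chain gives $P\subseteq P^\ast$ for every $P\in\mathcal M_p\setminus\{\varnothing\}$, whence $\alpha\notin P$, and then $\{\alpha\}\notin P$ as well, since $\{\alpha\}\in P\prec H_\theta$ would force its unique element $\alpha$ into $P$.

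The one bit to be careful about is this final absorption step---that a countable set $P\in N\prec H_\theta$ is automatically contained in $N$, and that ``$\alpha\notin P$ implies $\{\alpha\}\notin P$''---both routine elementarity maneuvers. Everything else is just bookkeeping. Notably, the choice of $f^\ast$ does not require any information about which $P\in\mathcal M_p$ are ``bad'' for $\Sigma(M)$; the regressivity constraint is sidestepped entirely by making $f^\ast$ large enough to escape all of $\mathcal M_p$.
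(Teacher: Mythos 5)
Your proposal is correct and follows essentially the same route as the paper: append $M$ on top of $\mathcal M_p$, set $d_{p^M}(M)=\varnothing$, and choose $f_{p^M}(M)$ to be a finite set in $M$ avoiding every model of $\mathcal M_p$ (the paper takes a finite set in $M\setminus\bigcup\mathcal M_p$, which is exactly your vacuity trick), so that clause (3) holds trivially for the new pairs. Your write-up just makes explicit the elementarity/foundation checks the paper leaves as ``easily seen.''
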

\begin{proof}
We let $p^M$ be defined as follows. Let $\mathcal M_{p^M}$ be just $\mathcal M_p\cup\{M\}$, extend $d_p$ as a function by letting 
$d_{p^M}(M)=\varnothing$, and  also extend $f_p$ as a function
by  letting  $f_{p^M}(M)$ be some finite set in $M\setminus\mathcal \bigcup M_p$. It is easily seen $p^M$ is a condition and extends $p$.
 \end{proof}

\begin{proposition}\label{genericity1}
 Suppose $\theta^*>|\mathcal P(H_\theta)|^+$ is a  regular cardinal.
 Assume $M^*\prec H_{\theta^*}$ is countable and contains $\Sigma$ and $[X]^{\omega}$.
Let $M\coloneqq H_\theta\cap M^*$.
Suppose $p_0\in\mathbb P_\Sigma$ is such that $M\in\mathcal M_{p_0}$.
Then $p_0$ is $(M^*,\mathbb P_\Sigma)$-almost strongly generic.
\end{proposition}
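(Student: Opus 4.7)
The plan is to construct a canonical restriction $q\rest M$ for every $q\leq p_0$, establish an amalgamation lemma (every condition of $M$ extending $q\rest M$ is already compatible with $q$), and then obtain $M^*$-stationarity of $S_q$ via \cref{unboundedness}.

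Since $M\in\mathcal{M}_{p_0}\subseteq\mathcal{M}_q$, define
$$q\rest M\ \coloneqq\ \bigl(\mathcal{M}_q\cap M,\ d_q\rest(\mathcal{M}_q\cap M),\ f_q\rest(\mathcal{M}_q\cap M)\bigr).$$
The three clauses of \cref{mainforcingSigma} are inherited from $q$, so $q\rest M\in\mathbb{P}_\Sigma$. To see that $q\rest M\in M$, the decisive point is that $M\in\mathcal{M}_q$: coherence of $d_q$ applied at any pair $N\in M$ both in $\mathcal{M}_q$ gives $d_q(N)\in M$, and the $f$-values $f_q(N)\in[N]^{<\omega}$ lie in $M$ because $N\in M$ is countable. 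The triple is then a finite object whose components all lie in $M$, hence belongs to $M$.

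The heart of the argument is showing that any $r\in M\cap\mathbb{P}_\Sigma$ with $r\leq q\rest M$ is compatible with $q$. The natural amalgamation $s$ has $\mathcal{M}_s=\mathcal{M}_r\cup\mathcal{M}_q$, $d_s(N)=d_r(N)\cup d_q(N)$ (undefined summands read as $\varnothing$) and $f_s=f_r\cup f_q$. Because $r\leq q\rest M$ forces $\mathcal{M}_q\cap M\subseteq\mathcal{M}_r\subseteq M$, the union $\mathcal{M}_s$ assembles into a single $\in$-chain consisting of $\mathcal{M}_r$ (strictly below $M$), then $M$, then the members of $\mathcal{M}_q$ strictly above $M$. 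The $d$-coherence is routine, using that $r\in M$ and that any countable elementary submodel $N'\prec H_\theta$ containing $M$ as an element automatically contains $M$ as a subset. The $f$-condition in "same-component" cases is inherited from $r$ or $q$. The critical mixed case is when $N\in\mathcal{M}_q$ with $M\in N$ (or $N=M$) and $P\in\mathcal{M}_r\setminus(\mathcal{M}_q\cap M)$, so that $P\subseteq M$: if $f_q(N)\in P$ then $f_q(N)\subseteq P\subseteq M$, so applying the $f$-condition of $q$ to the pair $(N,M)\in\mathcal{M}_q$ gives $M\cap X\in\Sigma(N)$. An Ellentuck-openness argument then transports this membership down to $P\cap X$: choose a basic neighborhood $[a,A_0]\subseteq\Sigma(N)$ of $M\cap X$ whose finite part $a$ absorbs $f_q(N)\cap X$, and verify that $a\subseteq f_q(N)\subseteq P$ and $P\cap X\subseteq M\cap X\subseteq A_0$ give $P\cap X\in[a,A_0]\subseteq\Sigma(N)$.

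Granted the amalgamation lemma, the conclusion follows quickly via \cref{unboundedness}. Let $U\in M^*$ be unbounded in $[\mathbb{P}_\Sigma]^\omega$. Since $q\rest M\in M\subseteq M^*$, elementarity of $M^*$ combined with unboundedness of $U$ yields $A\in U\cap M^*$ with $q\rest M\in A$. The set $A$ is a countable element of $M^*$ with $A\subseteq H_\theta$, so $A\subseteq M^*\cap H_\theta=M$. Consequently every $r\in A$ with $r\leq q\rest M$ belongs to $M\cap\mathbb{P}_\Sigma$, and by the amalgamation lemma is compatible with $q$. Taking the witness $q\rest_A\coloneqq q\rest M$ shows $A\in S_q$, so $U\cap S_q\cap M^*\neq\varnothing$ and $S_q$ is $M^*$-stationary. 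The main obstacle is the Ellentuck-openness calculation inside the amalgamation: one must choose the basic neighborhood of $M\cap X$ in $\Sigma(N)$ so that its finite part is dominated by $f_q(N)\cap X$, which is precisely the mechanism that makes $f_q$ act as a "threshold" in the forcing.
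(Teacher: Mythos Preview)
Your amalgamation lemma is false as stated, and the Ellentuck step is where it breaks. You claim that from $M\cap X\in\Sigma(N)$ one can choose a basic neighbourhood $[a,A_0]\subseteq\Sigma(N)$ with $a\subseteq f_q(N)$, and then conclude $a\subseteq P$. But openness only gives you \emph{some} finite $a\subseteq M\cap X$ with $[a,M\cap X]\subseteq\Sigma(N)$; there is no reason whatsoever that $a$ can be taken inside the fixed finite set $f_q(N)$. Shrinking the neighbourhood enlarges $a$, it does not shrink it. So you cannot guarantee $a\subseteq P$, and the conclusion $P\cap X\in\Sigma(N)$ fails. The case $N=M$ is even worse: here there is no pair in $\mathcal{M}_q$ to invoke at all, since $P\notin\mathcal{M}_q$, and nothing in $q\rest M$ forces $P\cap X\in\Sigma(M)$ for a freshly inserted $P\in\mathcal{M}_r$.

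This is exactly why the paper does not attempt to show that $q\rest M$ itself witnesses strong genericity. Instead, for each pair $(P,Q)$ with $P\subseteq M\subseteq Q$ and $f_q(Q)\in P$, the paper precomputes a finite $b_P^Q\subseteq P$ with $[b_P^Q,P\cap X]\subseteq\Sigma(Q)$, collects these into $B$, and then \emph{loads $B$ into the decorations} of a strengthened condition $p^*\leq q\rest M$. Any $r\leq p^*$ in the chosen $R$ is forced by the decoration clause of \cref{mainforcingSigma} to have its new models contain the relevant $b$'s, which is what makes the Ellentuck intervals apply. The case $N=M$ is handled separately by using the $M^*$-stationarity of $\Sigma(M)$ to pick $R$ with $R\cap X\in\Sigma(M)$; this is precisely where ``almost'' strong genericity (as opposed to strong genericity) is needed, and your argument bypasses it entirely. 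In short, the decorations $d_p$ are not inert bookkeeping---they are the mechanism that transmits the openness witnesses to new models, and your restriction $q\rest M$ carries none of that information.
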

\begin{proof}
Fix $p\leq p_0$.  We aim to show that $S_p$ is $M^*$-stationary, thus suppose $F\in M^*$ is an algebra on $\mathbb P_\Sigma$. Notice that
$H_\theta=\bigcup{\rm dom}(\Sigma)\in M^*$, and hence $\mathbb P_\Sigma\in M^*$.
Let $p\rest{M}=(\mathcal M_p\cap M,d_p\rest_{M},f_p\rest_{M})$.  It is clear that $p\rest{M}$ is a condition in $M$.  Set
$$\mathfrak{X}=\{(P,Q)\in\mathcal M_p\times\mathcal M_p: P\subseteq M\subseteq Q \text{ and } f_p(Q)\in P\in Q\}.$$ 
 If $(P,Q)\in\mathfrak{X}$, then there is a finite set $b^Q_P$ in $P$, and hence in $M$, such that 
 $$[b^Q_P,P\cap X]\subseteq\Sigma(Q).$$
  Fix such sets. Set $b_P=\bigcup\{b^Q_P:(P,Q)\in\mathfrak{X}\}$
 and
 $B=\{b_P: P\in {\rm dom}(\mathfrak X)\}$. Pick some regular cardinal $\mu$ with $|H_\theta|<\mu<\theta^*$ and consider 
 the following set which is easily verified to be an unbounded subset of  $[X]^\omega$ belonging to $M^*$.
 $$\mathcal U=\{R\cap X: \{B, [X]^{\omega},F,p\rest{M},\Sigma\}\subseteq R\prec H_\mu\text{ and } R \text{ is countable }\}.$$
 Since $X\in H_\theta$, every algebra on $X$ belongs to $H_\theta$, and hence $\Sigma(M)$ is also $M^*$-stationary.
 Now by the $M^*$-stationarity of $\Sigma(M)$ and \cref{unboundedness 2}, one can find  $A\in \mathcal U\cap M^*\cap \Sigma( M)$. 
Since  $M^*\prec H_{\theta^*}$, there is $R\in M^*$ with $\{B, [X]^\omega,F,p\rest{M},\Sigma\}\subseteq R$ such that
 $A=R\cap X$. Fix such $R$.
 Notice that as before, $X$ and $H_\theta$ are in $R$ as $[X]^\omega$ and $\Sigma$ belongs to $R$.
 Now using the openness of $\Sigma( M)$, there exists a finite set
 $b^M_R \subseteq A$ such that the interval $[b^M_R,A]$ is included in $\Sigma( M)$. Set $b_R=b^M_R\cup b_M$.
 We  need to  extend  $p\rest{M}$ to a condition in $R$ so that its extensions in $R$ are compatible with $p$. To this end,
  let  $p^*$ be the same as $p\rest{M}$ except about $d_{p^*}$, where we let it be defined on $\mathcal M_p\cap M$ by $d_{p^*}(P)=d_p(P)\cup b_{P^+}$,
where $P^+$ is the next model of $P$ in $(\mathcal M_p\cap M)\cup\{R\}$.
 It is clear that $p^*$ is a condition  in $R$ extending $p\rest_M$, since $p\rest{M},B, b_R\in H_\theta\cap R$.
   Now, suppose that $q\in R$ extends $p^*$. We shall show that $q$ is compatible  with $p$. Put $\mathcal M_r=\mathcal M_q\cup\mathcal M_p$.
 Let also $d_r$ be defined  on $\mathcal M_r$ as follows $d_r(P)=d_q(P)$ if $P\in M$, and $d_r(P)=d_p(P)$ otherwise.
  We simply put $f_r=f_p \cup f_q$. We shall show that $r$ is a condition.  \cref{it1-maindef,it2-maindef} of \cref{mainforcingSigma} are clearly satisfied by $r$, and what remains to be shown is \cref{it3-maindef}, i.e.,  whenever $ P\in Q$ are in $\mathcal M_r$ with  $f_r(Q)\in P$, we have that
  $P\cap X\in\Sigma(Q)$.
  To avoid the trivial cases, we may assume
  that $Q\notin M$, 
  and $P\in\mathcal M_q\setminus\mathcal M_p$. In this case
  there is $S\in\mathcal M_p\cap M$ such that $S\in P\in S^+$, where $S^+$ is as above. Now, if $S^+\in R$, then   $(S^+,Q)$ belongs to  $\mathfrak{X}$, and on the other hand $b_{S^+}^Q\subseteq b_{S^+}\subseteq d_q(S)\in P$. Consequently, we have that
 $$P\cap X\in [b_{S^+}^Q,S^+\cap X]\subseteq \Sigma(Q).$$
 Otherwise, $S^+=R$; in this case if $Q=M$, then
$P\cap X\in [b_R,A]\subseteq \Sigma(M)$, and if $M\neq Q$, then $$P\cap X\in [b_R,M\cap X]\subseteq[b^Q_M, M\cap X]\subseteq \Sigma(Q).$$
Therefore, $r$ is a condition. It is clear that it  extends both $p$ and $q$. Consequently,
$R\cap\mathbb P_\Sigma$ is in $M^*\cap S_p$. On the other hand,
 $R\cap\mathbb P_\Sigma$ is closed under $F$ as
 $F$ is in $R$.

\end{proof}

\begin{corollary}\label{MRP-almost}
$\mathbb P_\Sigma$ is a.s.p.
\end{corollary}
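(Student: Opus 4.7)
The plan is to chain together \cref{topMshort} and \cref{genericity1}; both were seemingly designed with exactly this corollary in mind. First I would fix a regular cardinal $\theta^* > |\mathcal P(H_\theta)|^+$ (as required by \cref{genericity1}) and declare the witnessing club to be
\[
\mathcal C = \{M^* \in [H_{\theta^*}]^\omega : M^* \prec H_{\theta^*} \text{ and } \{\Sigma, [X]^\omega, \mathbb P_\Sigma, H_\theta\} \subseteq M^*\}.
\]
That $\mathcal C$ is a club of countable elementary submodels containing $[\mathbb P_\Sigma]^\omega$ (the latter because $\mathbb P_\Sigma, H_\theta \in M^*$) is a standard L\"owenheim--Skolem observation.

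Given $M^* \in \mathcal C$ and a condition $p \in M^* \cap \mathbb P_\Sigma$, the only preliminary point to verify is that $M := H_\theta \cap M^*$ satisfies the hypotheses of \cref{genericity1}. Since $H_\theta \in M^*$, routine elementarity gives $M \prec H_\theta$; $M$ is countable because $M^*$ is; and since $[X]^\omega \in H_\theta \cap M^* = M$, the model $M$ belongs to $\mathrm{dom}(\Sigma)$. Observe also that $p \in H_\theta \cap M^* = M$, because $\mathbb P_\Sigma \subseteq H_\theta$.

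With this preparation in place, I would invoke \cref{topMshort} inside $M^*$ to produce an extension $p^M \leq p$ with $M \in \mathcal M_{p^M}$, and then apply \cref{genericity1} to $p^M$ and $M^*$ to conclude that $p^M$ is $(M^*, \mathbb P_\Sigma)$-almost strongly generic. Since $p \in M^* \cap \mathbb P_\Sigma$ was arbitrary, $\mathcal C$ witnesses the almost strong properness of $\mathbb P_\Sigma$. There is no real obstacle: the content of the corollary has essentially been absorbed into \cref{genericity1}, and this proof merely checks that its hypotheses are delivered by the club $\mathcal C$ together with \cref{topMshort}.
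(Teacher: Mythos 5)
Your proof is correct and follows exactly the route the paper intends: the paper's own proof of this corollary is simply ``clear from \cref{topMshort} and \cref{genericity1}'', and you have filled in precisely the hypothesis-checking (that $M=H_\theta\cap M^*$ is a countable elementary submodel of $H_\theta$ in ${\rm dom}(\Sigma)$ containing $p$) that makes the chaining work. The only slight imprecision is the phrase ``invoke \cref{topMshort} inside $M^*$'' --- the extension $p^M$ is produced in $V$ and need not (indeed cannot) lie in $M^*$, since $M\notin M^*$ --- but this does not affect the argument.
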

\begin{proof}
It is clear from \cref{topMshort} and \cref{genericity1}.
\end{proof}
For a $V$-generic filter $G\subseteq\mathbb P_\Sigma$, we let   
$\mathcal M_G=\{M:\exists p\in G \text{ such that } M\in\mathcal M_p\}$.
\begin{lemma}\label{cont}
Suppose $G$ is a $V$-generic filter on $\mathbb P_\Sigma$. Then $\mathcal M_G$ is a continuous $\in$-chain.
\end{lemma}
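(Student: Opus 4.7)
The statement splits into: $(a)$ showing $\mathcal{M}_G$ is totally $\in$-ordered, and $(b)$ checking continuity, i.e., that if $\langle M_\xi : \xi < \alpha\rangle$ enumerates $\mathcal{M}_G$ in $\in$-increasing order, then $M_\xi = \bigcup_{\eta<\xi} M_\eta$ at every limit $\xi$. Part $(a)$ is a routine compatibility argument: given $M_1, M_2 \in \mathcal{M}_G$, pick $p_i \in G$ with $M_i \in \mathcal{M}_{p_i}$ and a common extension $r \in G$; then $M_1, M_2 \in \mathcal{M}_r$, which is a finite $\in$-chain by Definition~\ref{mainforcingSigma}(\ref{it1-maindef}). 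For countable elementary submodels of $H_\theta$, $N \in M$ forces $N \subseteq M$ via an enumeration-by-elementarity argument, which also yields transitivity of $\in$ on $\mathcal{M}_G$ when combined with foundation.

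For $(b)$, the plan is to prove the following density claim and then invoke genericity: for every $p \in \mathbb{P}_\Sigma$, every $M \in \mathcal{M}_p$, and every $x \in M$, there exist $q \leq p$ and $N \in \mathcal{M}_q$ with $N \in M$ and $x \in N$. Granted this, no element of $\mathcal{M}_G$ can be a successor in the $\in$-enumeration (any predecessor $M^-$ of $M$ would have to contain every $x \in M$, forcing $M \subseteq M^-$, absurd), so every $M_\xi$ is a limit; and at each limit $\xi$ every $x \in M_\xi$ is placed in some $N \in \mathcal{M}_G$ with $N \in M_\xi$, hence in some $M_\eta$ with $\eta < \xi$.

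To construct the witnessing $N$ I would adapt the pattern of the proof of \cref{genericity1}. Let $M^-$ be the $\in$-predecessor of $M$ in $\mathcal{M}_p$, or $\varnothing$. For each $Q \in \mathcal{M}_p$ with $M \in Q$ and $f_p(Q) \in M$, condition \ref{mainforcingSigma}(\ref{it3-maindef}) applied to $p$ gives $M \cap X \in \Sigma(Q)$, so by openness of the Ellentuck topology I pick a finite $b^Q \subseteq M \cap X$ with $[b^Q, M \cap X] \subseteq \Sigma(Q)$. Set $B = \bigcup_Q b^Q$, a finite subset of $M \cap X$ and therefore an element of $M$. The set $\mathcal{U}$ of intersections $N' \cap X$ for countable $N' \prec H_\theta$ containing $[X]^\omega$ together with the parameters $\{M^-, x, p\rest_M, f_p(M), B\}$ is unbounded in $[X]^\omega$ and lies in $M$, so by $M$-stationarity of $\Sigma(M)$ and \cref{unboundedness 2} there is $A \in \mathcal{U} \cap M \cap \Sigma(M)$; lifting through elementarity produces $N \in M$ with $N \cap X = A$ and $N$ containing the listed parameters. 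I then set $\mathcal{M}_q = \mathcal{M}_p \cup \{N\}$, $d_q = d_p \cup \{(N, \varnothing)\}$, and $f_q = f_p \cup \{(N, \{M^-\})\}$.

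The main obstacle, and the only nontrivial step, is verifying condition (\ref{it3-maindef}) for the new pairs in $\mathcal{M}_q$. The pair $(N, M)$ is handled by $N \cap X = A \in \Sigma(M)$. For pairs $(N, Q)$ with $M \in Q \in \mathcal{M}_p$, if $f_p(Q) \in N$ then $f_p(Q) \in M$ (since $N \subseteq M$), so $b^Q$ is defined, and the inclusions $b^Q \subseteq B \subseteq N$ and $N \cap X \subseteq M \cap X$ place $N \cap X$ in the basic open $[b^Q, M \cap X] \subseteq \Sigma(Q)$. For pairs $(P, N)$ with $P \in N \cap \mathcal{M}_p$, the choice $f_q(N) = \{M^-\}$ makes the premise $f_q(N) \in P$ vacuous, since $M^- \notin P$ for $P$ strictly $\in$-below $M^-$ and for $P = M^-$ by foundation. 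This closely parallels the corresponding verification in \cref{genericity1}, and no further ingredients are needed.
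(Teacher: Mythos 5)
Your part (a) (linearity via a common extension in $G$) is fine, but the density claim driving part (b) is both unprovable and in fact false, so the argument has a genuine gap. First, the claim proves too much: since $\mathcal M_G$ is linearly ordered by $\in$ and $\in$ is well-founded, $\mathcal M_G$ is a well-ordered chain, and (being infinite, e.g.\ by \cref{topMshort} and genericity) it must contain $\in$-successor models; continuity is only supposed to hold at \emph{limit} points of the chain. Your claim, applied to a successor model $M$ with predecessor $M^-$ and $x=M^-$, would interpolate some $N\in\mathcal M_G$ with $M^-\in N\in M$, contradicting successorship. Second, the specific step that fails in your construction is the assertion that $\mathcal U=\{N'\cap X: N'\prec H_\theta \text{ countable, containing the parameters}\}$ ``lies in $M$'' and that one can then ``lift through elementarity'' to get the witness $N\in M$. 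Here $M\in\mathcal M_p\subseteq\operatorname{dom}(\Sigma)$ is only elementary in $H_\theta$; the set $\mathcal U$ is defined by quantifying over elementary submodels of $H_\theta$ (and the lifting step needs the witnessing relation, essentially $\Sigma$ itself, inside the model), none of which is available over $H_\theta$ or as an element of $M$. This device is legitimate in \cref{genericity1} only because there the ambient model $M^*$ is elementary in the much larger $H_{\theta^*}$ and contains $\Sigma$, and the auxiliary model $R$ found inside $M^*$ is \emph{not} required to be a member of $\operatorname{dom}(\Sigma)$ or of the chain; in your density argument the new model must actually enter $\mathcal M_q$ below $M$, and $M$-stationarity of $\Sigma(M)$ (which only sees algebras on $X$ belonging to $M$) cannot produce a member of $\operatorname{dom}(\Sigma)\cap M$ with trace in $\Sigma(M)$.

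The paper's proof of \cref{cont} avoids interpolating new models altogether. It isolates the ``potentially limit'' models $M$ of $\mathcal M_{\dot G}$ (those below which the generic chain has no largest element) and, for $x\in M$, uses the decoration function instead: one may densely often enlarge $d_q(Q_x)$, for $Q_x$ the top model of $\mathcal M_q$ below $M$, so as to contain $x$ (this only uses clause (\ref{it2-maindef}) of \cref{mainforcingSigma}, since a finite subset of $M$ is an element of $M$ and of every higher model). Then any model the generic later places between $Q_x$ and $M$ must contain $d(Q_x)$, hence $x$; since $M$ is potentially limit, such a model exists in $\mathcal M_G$, and so $M$ is the union of its predecessors. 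If you want to keep your outline, you should replace your density claim by this decoration argument (or some equivalent), and restrict the continuity verification to limit points of the chain rather than trying to show every model is a limit.
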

\begin{proof}

We show that  if $( M_n)_{n<\omega}$ is a sequence in $\mathcal M_G$, then $\bigcup_{n<\omega} M_n$
is also in $\mathcal M_G$.
This is equivalent to saying that if $M\in\mathcal M_G$ is not the minimal member and is such that for every $P\in\mathcal M_G$ below $M$,
there is a model in $\mathcal M_G$ between $P$ and $M$ (let us call this property \emph{potentially limit}), then $M$ is the union of models below $M$ in $\mathcal M_G$, i.e., $M$ is a limit model of $\mathcal M_G$. 
Thus suppose a countable model $M\prec H_\theta$ is forced, by a condition $p\in G$, to be a potentially limit model in $\mathcal M_{\dot G}$, where  $\mathcal M_{\dot G}$ is a canonical $\mathbb P_\Sigma$-name associated to $\mathcal M_G$. 
Without loss of generality, we may assume that
$\mathcal M_p$ contains $M$ and also some model below $M$ in $\mathcal M_p$ (this is possible as $M$ is potentially limit). Now working in $V$, let $x\in M$.
If $q\leq p$ is an arbitrary condition, then $q$  forces  $M$ to be a potentially limit model of $\mathcal M_{\dot G}$, and that one can extend $q$ to a condition $q_x$ such that
$x\in d_{q_x}(Q_x)$ where $Q_x$ is the largest model below $M$ in $\mathcal M_{q_x}$. This is possible as $\mathcal M_p\subseteq\mathcal M_q$  and that there is some model below $M$ in $\mathcal M_p$.
It then  implies that any extension of $q_x$ which has a model above $Q_x$  should contain $x$, and 
it shows that the set of conditions $r$ such that $x$  belongs to the decoration $d_r(-)$ of some model in $\mathcal M_r \cap M$ is dense below $p$. 
This together with the fact that $p$ forces $M$ to be a potentially limit model in $\mathcal M_{\dot G}$ imply that $p$ forces  there to be a model below $M$ in $\mathcal M_{\dot G}$ containing $x$.
Therefore, $M$ is the union of its predecessors  in $\mathcal M_G$.
In other words, $M$ is a limit model of $\mathcal M_G$.
\end{proof}

Let $f_G$ be defined on $\mathcal M_G$ by letting $f_G(M)=f_p(M)$ for some, or equivalently all, $p\in G$ with $M\in\mathcal M_p$.

\begin{proposition}\label{adds reflecting sequence}
 $\mathbb P_\Sigma$ adds a reflecting sequence for $\Sigma$. 
\end{proposition}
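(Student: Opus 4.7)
The plan is to enumerate $\mathcal M_G$ in $\in$-order as $\langle M_\xi:\xi<\omega_1\rangle$, check that this indexing genuinely runs up to $\omega_1$, and then at each non-zero limit $\xi$ locate a threshold $\zeta<\xi$ by combining the regressive component $f_p$ of a condition $p\in G$ with clause (3) of \cref{mainforcingSigma}.

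For the length, first observe that $\xi\mapsto M_\xi\cap\omega_1$ is strictly increasing: for $\eta<\xi$ the chain gives $M_\eta\in M_\xi$, so $M_\eta\cap\omega_1\in M_\xi$ is a countable ordinal inside $M_\xi$ and hence lies below $M_\xi\cap\omega_1$. Thus the order type is at most $\omega_1$. Conversely, given any $p\in\mathbb P_\Sigma$ and $\alpha<\omega_1$, pick a countable $N\in\mathrm{dom}(\Sigma)$ with $p\in N$ and $N\cap\omega_1>\alpha$; then \cref{topMshort} produces $p^N\leq p$ with $N\in\mathcal M_{p^N}$. So the ordinals $M\cap\omega_1$ for $M\in\mathcal M_G$ are cofinal in $\omega_1$, and since \cref{MRP-almost} guarantees $\mathbb P_\Sigma$ is proper (hence preserves $\omega_1$), the chain has order type exactly $\omega_1$. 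Continuity is \cref{cont}.

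Now fix a non-zero limit $\xi<\omega_1$. Choose any $p\in G$ with $M_\xi\in\mathcal M_p$ and set $s\coloneqq f_p(M_\xi)$; as a finite subset of $M_\xi$, the set $s$ is itself an element of $M_\xi$, by closure of $M_\xi$ under finite-set formation. By continuity, $M_\xi=\bigcup_{\eta<\xi}M_\eta$, hence $s\in M_\zeta$ for some $\zeta<\xi$. I claim this $\zeta$ witnesses reflection at $\xi$. Given any $\eta$ with $\zeta\leq\eta<\xi$, use density together with the filter property to pick $q\in G$ with $q\leq p$ and $M_\eta\in\mathcal M_q$. Then $M_\eta\in M_\xi$ are both in $\mathcal M_q$, and $f_q(M_\xi)=s$ lies in $M_\zeta\subseteq M_\eta$ (using the standard fact that a countable $M_\zeta\in M_\eta\prec H_\theta$ is contained in $M_\eta$, via any surjection $\omega\to M_\zeta$ in $M_\eta$). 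Clause (3) of \cref{mainforcingSigma} applied to $q$ now yields $M_\eta\cap X\in\Sigma(M_\xi)$, as required.

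The one delicate point is extracting a single $\zeta$ that works uniformly across every $\eta\in[\zeta,\xi)$; this is precisely what the regressivity of $f_p$ and the coherence of $f_G$ across conditions in $G$ afford, since the value $f_q(M_\xi)=f_p(M_\xi)$ does not drift as $q$ varies. Everything else—length of the chain, its continuity, and the verification of the reflection clause in $V[G]$—reduces to a density argument or a direct appeal to an earlier lemma.
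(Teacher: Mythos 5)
Your proof is correct and follows essentially the same route as the paper: enumerate $\mathcal M_G$, use continuity (\cref{cont}) to find $\zeta<\xi$ with $f_G(M_\xi)\in M_\zeta$, and then apply clause (3) of \cref{mainforcingSigma} to a common extension $q\in G$ containing both $M_\eta$ and $M_\xi$. The only difference is that you also verify explicitly that the chain has order type exactly $\omega_1$ (via \cref{topMshort} and preservation of $\omega_1$), a point the paper leaves implicit.
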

\begin{proof}
Let $G$ be a $V$-generic filter on $\mathbb P_\Sigma$. By  \cref{cont}, $\mathcal M_G$ is a continuous $\in$-chain of models.
Let $\langle M_\xi:\xi<\omega_1\rangle$ be the natural enumeration of $\mathcal M_G$, i.e., for each $\xi<\omega_1$,
 $M_\xi\in M_{\xi+1}$ . We claim that this is a reflecting sequence for $\Sigma$.
If $\xi<\omega_1$ is a limit ordinal, then $f_G(M_\xi)$ belongs to $M_\xi$, and thus by the continuity, there is
$\zeta<\xi$ so that $f_G(M_\xi)\in M_\zeta$, and thus for each $\eta\in\xi\setminus\zeta$, $f_G(M_\xi)\in M_\eta$.
It is enough to pick $q\in G$ such that $M_\xi,M_\eta\in\mathcal M_q$, and hence $M_\eta\cap X\in \Sigma(M_\xi)$.

\end{proof}

\section{Conclusion}

In  \cite{TodorcevicPFA}, Todorčević introduced and studied the forcing axiom $\rm PFA(S)$, and showed its consistency as well. This forcing axiom states that $S$ is a coherent Suslin tree, and if $\mathbb P$ is a proper forcing which preserves the c.c.c-ness of $S$, and that if $\mathcal D$ is a collection of dense subsets of $\mathbb P$ with $|\mathcal D|\leq\aleph_1$, then there is a $\mathcal D$-generic filter on $\mathbb P$. Now by \cref{asp ccc} alone, $\mathbb P_\Sigma$ preserves the c.c.c-ness of $S$.  Using \cref{MRP-almost}, \cref{adds reflecting sequence}, and standard arguments, one can show that $\rm MRP$ holds under $\rm PFA(S)$.

\section*{Acknowledgment}
Part of this note was discovered when the author was a PhD student  under the supervision of Boban Veličković, in particular the notion of almost strong properness was obtained in a  collaboration with him.
The author would like to thank him for his support and his permission to include it here. I would also like to thank the referee for their meticulous reading, useful suggestions and comments.

\bibliographystyle{elsarticle-num}

\bibliography{Almost_Strong_Properness}
\vspace{1.5cm}
\end{document}